\newcommand{\wrt}{with respect to}
\newcommand{\m}{\mathfrak{m} }
\newcommand{\M}{\mathcal{M} }
\newcommand{\q}{\mathfrak{q} }
\newcommand{\R}{\mathcal{R} }
\newcommand{\Sc}{\mathcal{S} }
\newcommand{\Z}{\mathbb{Z} }
\newcommand{\rt}{\rightarrow}
\newcommand{\wh}{\widehat }
\newcommand{\wt}{\widetilde }
\newcommand{\charp}{\operatorname{char}}
\newcommand{\Proj}{\operatorname{Proj}}
\newcommand{\Spec}{\operatorname{Spec}}
\newcommand{\coker}{\operatorname{coker}}
\theoremstyle{plain}
\newtheorem{theorem}{Theorem}[section]
\newtheorem{corollary}[theorem]{Corollary}
\newtheorem{proposition}[theorem]{Proposition}
\theoremstyle{definition}
\newtheorem{remark}[theorem]{Remark}
\newtheorem{construction}[theorem]{Construction}
\theoremstyle{remark}
\begin{document}

\title[Cohomology Vanishing Theorems]{Cohomology Vanishing theorems over some rings containing nilpotents}
\author{Tony~J.~Puthenpurakal}
\date{\today}
\address{Department of Mathematics, IIT Bombay, Powai, Mumbai 400 076, India}

\email{tputhen@gmail.com}
\subjclass{Primary 13D45, 13A30; Secondary 14B15}
\keywords{local cohomology, graded local cohomology, associated graded rings, Rees algebras, holonomic modules, $F$-finite $F$-modules}

 \begin{abstract}
In this paper we study  some un-expected vanishing theorems of local cohomology modules of  certain graded rings with nilpotents.

(1) Let $(A,\m)$ be complete Noetherian local ring of dimension $d$ and let $P$ be a prime ideal with $G_P(A) = \bigoplus_{n \geq 0}P^n/P^{n+1}$ a domain.  Fix $r \geq 1$. If $J$ is a homogeneous ideal of $G_{P^r}(A)$ with $\dim G_{P^r}(A)/J > 0$ then the local cohomology module $H^d_J(G_{P^r}(A)) = 0$.

(2) Let $A = K[[X_1, \ldots,X_d]]$ and let $\m = (X_1, \ldots, X_d)$. Assume $K$ is separably closed. Fix $r \geq 1$. Let $J$ be a homogeneous ideal of $G_{\m^r}(A)$.
We show that local cohomology modules $H^{j}_J(G_{\m^r}(A)) = 0$ for $j \geq d -1$ if and only if
  $\dim G_{\m^r}(A)/J \geq 2$ and $\Proj G_{\m^r}(A)/J $ is connected.
\end{abstract}
 \maketitle
\section{introduction}
Local cohomology is a fundamental tool in algebraic geometry and commutative algebra. So vanishing results are very useful. Let $(A,\m)$ be either a Noetherian local ring or a standard graded Noetherian ring with $\m$ its maximal homogeneous ideal with $\dim A = d$. Let $I \subseteq \m$ be an ideal (it is homogeneous if $A$ is graded). The first vanishing result is that $H^i_I(A) = 0$ for all $i > d$; see \cite[6.1.2]{BS}. Perhaps the next result is the famous Hartshorne-Lichtenbaum theorem, which states that if $A$ is a complete local domain (in the graded case $A_0$ is a complete domain and $A$ is a graded domain)  then $H^d_I(A) = 0$ if and only if $\dim A/I  > 0$; see \cite[14.1]{I24} (for the graded case see \cite[14.1.16]{BS}). There is a version of this result which is applicable to all Noetherian local rings but it is complicated to state; see \cite[14.6]{I24}. The next case is when $A$ is a complete regular local ring containing a separably closed residue field then $H^{j}_I(A)  = 0$ for $j \geq d -1$ if and only id $\dim A/I \geq 2$ and
$\Spec(A) \setminus \{ \m \}$ is connected, see \cite[14.7]{I24}. Analogous result holds in the graded case when $A = k[X_1,\ldots, X_d]$, see \cite[7.5]{H}.

Usually local cohomology behaves well when the underlying rings are nice. In this paper we prove vanishing theorems for certain rings containing nilpotents. We stress that these results are completely unexpected.

We first prove
\begin{theorem}\label{m-1}
Let $(A,\m)$ be complete Noetherian local ring of dimension $d$ and let $P$ be a prime ideal with $G_P(A) = \bigoplus_{n \geq 0}P^n/P^{n+1}$ a domain.  Fix $r \geq 1$. If $J$ is a homogeneous ideal of $G_{P^r}(A)$ with $\dim G_{P^r}(A)/J > 0$ then the local cohomology module $H^d_J(G_{P^r}(A)) = 0$.
\end{theorem}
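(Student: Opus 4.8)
The plan is to reduce the vanishing of $H^d_J(G_{P^r}(A))$ to the Hartshorne–Lichtenbaum theorem applied to a genuine complete local \emph{domain}. The key structural observation is that although $G_{P^r}(A)$ generally has nilpotents, it has a well-understood relationship with $G_P(A)$, which by hypothesis is a domain. First I would recall that $G_{P^r}(A) = \bigoplus_{n\ge 0} P^{rn}/P^{r(n+1)}$ and that there is a natural finite inclusion (or rather, $G_{P^r}(A)$ is a direct summand, as an additive group, of the $r$-th Veronese of $G_P(A)$): indeed the $r$-th Veronese subring $G_P(A)^{(r)} = \bigoplus_{n\ge 0} P^{rn}/P^{rn+1}$ is a domain of the same dimension $d$, and there is a degree-preserving surjection $G_{P^r}(A)\twoheadrightarrow G_P(A)^{(r)}$ whose kernel $N = \bigoplus_{n\ge 1}(P^{rn+1}/P^{r(n+1)})$ is precisely the nilradical of $G_{P^r}(A)$ (this uses that $G_P(A)$ being a domain forces $\bigcap_n P^n$-type separation statements; more concretely $N^{r}=0$ after enough multiplication, or at least $N$ is nilpotent because it is a homogeneous ideal contained in elements of positive degree in a ring whose reduction is the domain $G_P(A)^{(r)}$).

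Granting that $N := \mathrm{nil}(G_{P^r}(A))$ is nilpotent with $G_{P^r}(A)/N \cong G_P(A)^{(r)}$ a $d$-dimensional complete (after completion at the irrelevant ideal) graded domain, the next step is a standard devissage. Since $N$ is nilpotent, $N^{\ell}=0$ for some $\ell$, and filtering by powers of $N$ gives a finite filtration of $G_{P^r}(A)$ whose successive quotients $N^i/N^{i+1}$ are finitely generated graded modules over $R:=G_P(A)^{(r)}$. Local cohomology $H^d_J(-)$ being a right-exact-on-top functor in the sense that $H^{d}_J$ (the top nonzero one, $d = \dim$) is right exact, a short exact sequence $0\to N^{i+1}\to N^i\to N^i/N^{i+1}\to 0$ yields surjections on $H^d_J$, so it suffices to show $H^d_J(M)=0$ for every finitely generated graded $R$-module $M$ with $\dim M \le d$, where we view $J$ as acting through $R$ via the map $G_{P^r}(A)\to R$ (note $JR$ has $\dim R/JR = \dim G_{P^r}(A)/(J+N) = \dim G_{P^r}(A)/J > 0$ since $N$ is nilpotent). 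For such $M$, $H^d_J(M)$ is computed over $R$, and one reduces to $M=R$ itself (again by devissage over the domain $R$, using that any f.g. $R$-module of dimension $d$ has a prime filtration with quotients $R/\mathfrak p$, and $\dim R/\mathfrak p = d$ forces $\mathfrak p = 0$ since $R$ is a domain, while lower-dimensional quotients have $H^d = 0$ for trivial dimension reasons).

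Finally I would invoke the graded Hartshorne–Lichtenbaum vanishing theorem (\cite[14.1.16]{BS}, cited in the introduction) for the complete graded domain: after completing $R$ at its homogeneous maximal ideal (or directly in the graded setting, since $R_0$ is a complete local domain—here actually a field or complete domain depending on $A_0$—and $R$ is a graded domain), $H^d_{JR}(R)=0$ if and only if $\dim R/JR > 0$, which holds by the paragraph above. Pulling this back up the $N$-adic filtration gives $H^d_J(G_{P^r}(A))=0$. The main obstacle I anticipate is the first step: precisely identifying the nilradical of $G_{P^r}(A)$ and verifying $G_{P^r}(A)/N \cong G_P(A)^{(r)}$ together with the nilpotency of $N$ and the dimension equality $\dim G_{P^r}(A) = \dim G_P(A) = d$. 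This requires care because the natural map relating $G_{P^r}(A)$ and the Veronese of $G_P(A)$ is not an isomorphism in general; one must check that $G_P(A)$ being a domain is exactly what makes the kernel nilpotent and makes the reduced ring a domain of full dimension $d$ (the dimension count uses that $\dim G_P(A) = \dim A = d$ since $A$ is complete local, hence $P$ is contained in a minimal prime only if... actually $\dim G_P(A) = \dim A$ holds when $A$ is local—this is standard—and Veronese preserves dimension). Once that algebraic input is nailed down, the cohomological part is routine devissage plus Hartshorne–Lichtenbaum.
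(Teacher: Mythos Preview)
Your approach is correct and genuinely different from the paper's. You work directly with the surjection $G_{P^r}(A) \twoheadrightarrow G_P(A)^{(r)}$, identify its kernel as the nilradical (one small correction: the kernel also has a nonzero degree-$0$ piece $P/P^r$, so $N = \bigoplus_{n\geq 0} P^{rn+1}/P^{rn+r}$; your nilpotency calculation $N^r = 0$ goes through unchanged), and then reduce by a two-step d\'evissage---first through the $N$-adic filtration, then through a prime filtration over the domain $R = G_P(A)^{(r)}$---to the graded Hartshorne--Lichtenbaum theorem for $R$ (whose degree-zero piece $A/P$ is indeed a complete local domain, so \cite[14.1.16]{BS} applies). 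This is elementary and self-contained: no Rees algebras, no auxiliary infinite modules.

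The paper instead builds a machine. It passes to the Rees algebra $A[Pt]$, constructs from $J$ an ideal $K^\sharp \subseteq A[Pt]$ whose $r$-th Veronese has the same radical as $\eta^{-1}(J)$, and introduces the infinitely generated module $W^P(A) = \bigoplus_{n\geq 1} A/P^n$ sitting in $0 \to G_P(A) \to W^P(A)(1) \xrightarrow{t^{-1}} W^P(A) \to 0$. Hartshorne--Lichtenbaum is applied to $G_P(A)$ itself (not its Veronese); the resulting injectivity/surjectivity of $t^{-1}$ on $H^{d-1}_{K^\sharp}(W^P(A))$ and $H^{d}_{K^\sharp}(W^P(A))$ is transported through the Veronese to $W^{P^r}(A)$, and the analogous exact sequence for $G_{P^r}(A)$ finishes. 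The payoff of this heavier machinery is reusability: the same $W$-module technique drives the proofs of Theorem~\ref{m-2} (vanishing of $H^{d-1}$) and its converse Theorem~\ref{m-2c}, where your d\'evissage would stall because $H^{d-1}_{JR}(R/\mathfrak{p})$ need not vanish for height-one primes $\mathfrak{p}$ even when $H^{d-1}_{JR}(R) = 0$. For Theorem~\ref{m-1} in isolation, however, your argument is the shorter and more transparent one.
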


Next we show
\begin{theorem}
\label{m-2} Let $(A,\m)$ be a regular local ring with $k = A/\m$ separably closed. Fix $r \geq 1$. Let $J$ be a homogeneous ideal of $G_{\m^r}(A)$.
If $\dim G_{\m^r}(A)/J \geq 2$ and $\Proj G_{\m^r}(A)/J $ is  connected then
the local cohomology modules $H^{j}_J(G_{\m^r}(A)) = 0$ for $j \geq d -1$.
\end{theorem}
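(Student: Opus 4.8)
The plan is to pass, modulo nilpotents, from $G_{\m^r}(A)$ to the $r$-th Veronese subring of a polynomial ring, and then to transport the graded Huneke--Lyubeznik vanishing theorem \cite[7.5]{H} along the chain of ring maps $G_{\m^r}(A)\twoheadrightarrow S^{(r)}\hookrightarrow S$ that links $G_{\m^r}(A)$ to a polynomial ring $S$.

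\emph{Step 1: structure and reduction to a module statement over $S^{(r)}$.} Since $A$ is regular local, $S:=G_{\m}(A)$ is the polynomial ring $k[X_1,\dots,X_d]$. For each $n$ the natural surjection $\m^{rn}/\m^{r(n+1)}\to\m^{rn}/\m^{rn+1}$ assembles into a surjective homomorphism of graded rings $\pi\colon G\to S^{(r)}$, where $G:=G_{\m^r}(A)$ and $S^{(r)}:=\bigoplus_{n\ge 0}S_{rn}$ is the $r$-th Veronese subring of $S$; one checks that $\ker\pi$ is exactly the nilradical $N$ of $G$ (every homogeneous element of $\ker\pi$ is nilpotent, while $S^{(r)}$ is a domain), and $N$ is nilpotent because $G$ is Noetherian. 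Set $\ov J:=\pi(J)\subseteq S^{(r)}$. Since $N$ is nilpotent, $\dim G/J=\dim S^{(r)}/\ov J$ and $\Proj(G/J)\cong\Proj(S^{(r)}/\ov J)$ as topological spaces, so the hypotheses become $\dim S^{(r)}/\ov J\ge 2$ and $\Proj(S^{(r)}/\ov J)$ connected. Filtering $G\supseteq N\supseteq N^2\supseteq\cdots$ (finite since $N$ is nilpotent), each quotient $N^i/N^{i+1}$ is a finitely generated graded module over $G/N=S^{(r)}$, and its local cohomology with support in $J$ computed over $G$ coincides with its local cohomology with support in $\ov J$ computed over $S^{(r)}$. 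Hence, by the long exact sequences in local cohomology attached to this filtration, it suffices to prove that $H^j_{\ov J}(M)=0$ for all $j\ge d-1$ and every finitely generated graded $S^{(r)}$-module $M$.

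\emph{Step 2: the polynomial ring.} Put $\aF:=\ov J\,S\subseteq S$. Because $S$ is module-finite over $S^{(r)}$, lying-over shows that $\aF\cap S^{(r)}$ and $\ov J$ have the same radical; since moreover $\Proj R^{(r)}\cong\Proj R$ for any graded ring $R$, and $(S/\aF)^{(r)}=S^{(r)}/(\aF\cap S^{(r)})$, we get $\Proj(S/\aF)\cong\Proj(S^{(r)}/\ov J)$ and $\dim S/\aF=\dim S^{(r)}/\ov J\ge 2$. Therefore \cite[7.5]{H}, applicable because $k$ is separably closed and $S=k[X_1,\dots,X_d]$, gives $H^j_{\aF}(S)=0$ for $j\ge d-1$. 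Since $S$ is regular of Krull dimension $d$, every finitely generated $S$-module admits a finite free resolution, and the standard d\'evissage along such a resolution propagates the vanishing: $H^j_{\aF}(M')=0$ for all $j\ge d-1$ and every finitely generated graded $S$-module $M'$.

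\emph{Step 3: descent to $S^{(r)}$ and conclusion.} The degreewise projection $S=\bigoplus_m S_m\to\bigoplus_{r\mid m}S_m=S^{(r)}$ is $S^{(r)}$-linear and splits the inclusion $S^{(r)}\hookrightarrow S$, so $S\cong S^{(r)}\oplus C$ as graded $S^{(r)}$-modules. Consequently any finitely generated graded $S^{(r)}$-module $M$ is a graded direct summand of $M\otimes_{S^{(r)}}S$, so $H^j_{\ov J}(M)$ is a direct summand of $H^j_{\ov J}(M\otimes_{S^{(r)}}S)=H^j_{\aF}(M\otimes_{S^{(r)}}S)$, which vanishes for $j\ge d-1$ by Step 2. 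This establishes the reduction of Step 1, and therefore $H^j_J(G_{\m^r}(A))=0$ for all $j\ge d-1$. The main obstacle is Step 1: recognizing that $G_{\m^r}(A)/N$ is the Veronese ring $S^{(r)}$, and checking that both hypotheses genuinely survive the chain $\Proj(G/J)\cong\Proj(S^{(r)}/\ov J)\cong\Proj(S/\aF)$ together with the attendant equalities of Krull dimension; once the problem has been moved onto the polynomial ring $S$, the remaining steps are formal, using only \cite[7.5]{H} and the fact that a finite-dimensional regular ring bounds the cohomological dimension of each of its finitely generated modules by that of the ring itself.
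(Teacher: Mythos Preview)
Your proof is correct and takes a genuinely different route from the paper. The paper works through the Rees algebra: it pulls $J$ back to an ideal $K$ of $\R(\m^r)=\R(\m)^{<r>}$, lifts it to an ideal $K^\sharp$ of $\R(\m)$ via Construction~\ref{const}, applies \cite[7.5]{H} to $G_\m(A)\cong k[X_1,\dots,X_d]$, and then transfers the resulting vanishing back to $G_{\m^r}(A)$ by analysing the degree-wise maps induced by $t^{-1}$ on $H^*_{K^\sharp}(W^\m(A))$ through the short exact sequence $0\to G_I(A)\to W^I(A)(1)\to W^I(A)\to 0$ and the compatibility of $W^I(A)$ with the Veronese functor. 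You bypass the Rees algebra and the module $W$ entirely: you identify $G_{\m^r}(A)_{\mathrm{red}}$ with $S^{(r)}$ directly, reduce modulo the nilpotent filtration, and then use the $S^{(r)}$-linear retraction $S\to S^{(r)}$ together with the standard fact that the cohomological dimension of an ideal is already detected on the ring itself to pull the vanishing down from $S$. Your approach is shorter and more elementary for this particular statement (and the same reduction, with graded Hartshorne--Lichtenbaum in place of \cite[7.5]{H}, would equally reprove Theorem~\ref{m-1}, since that cohomological-dimension fact needs no regularity hypothesis). The paper's Rees-algebra and $W$-module machinery, however, is what drives the converse in Theorem~\ref{m-2c}, where information must flow in the opposite direction and your direct-summand descent has no evident analogue.
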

\begin{remark}
Note that in Theorem \ref{m-2} we \emph{do not} assume $A$ is equi-characteristic. Also note that for $r \geq 2$ the ring  $G_{\m^r}(A)$ may not contain a field.
\end{remark}
It is perhaps worthwhile to investigate converse of Theorem \ref{m-2}. We prove:
\begin{theorem}
\label{m-2c} Let $(A,\m)$ be an equi-characteristic regular local ring with $k = A/\m$ separably closed. Fix $r \geq 1$. Let $J$ be a homogeneous ideal of $G_{\m^r}(A)$.
If the local cohomology modules $H^{j}_J(G_{\m^r}(A)) = 0$ for $j \geq d -1$,
then $\dim G_{\m^r}(A)/J \geq 2$ and $\Proj G_{\m^r}(A)/J $ is  connected.
\end{theorem}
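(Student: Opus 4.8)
The plan is to prove the two conclusions separately, in each case by contradiction, using the two vanishings $H^{d-1}_J(B)=0$ and $H^{d}_J(B)=0$; I write $B=G_{\m^r}(A)$ throughout. I would open with a few reductions and standing facts. First, each graded piece $\m^{rn}/\m^{r(n+1)}$ has finite length, so $B$ does not change if $A$ is replaced by $\widehat{A}$; thus $B\cong G_{(\widehat{\m})^r}(\widehat{A})$ with $\widehat{A}$ complete regular local and $G_{\widehat{\m}}(\widehat{A})$ a polynomial ring over $k$ (hence a domain), so Theorem \ref{m-1} applies to $B$: $H^{d}_{\mathfrak{c}}(B)=0$ for every proper homogeneous ideal $\mathfrak{c}$ of $B$ with $\dim B/\mathfrak{c}>0$. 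Second, $\dim B=\dim A=d$, and $B$ is $\mathbb{Z}_{\geq 0}$-graded with $B_0=A/\m^r$ Artinian local, so $B$ has a unique maximal homogeneous ideal $\m_B$; moreover $\m_B$ is the only prime of $B$ containing $B_+$, and $H^{d}_{\m_B}(B)\neq 0$ by Grothendieck non-vanishing applied to the $d$-dimensional local ring $B_{\m_B}$.

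For $\dim B/J\geq 2$: suppose not. If $\dim B/J=0$ then $\sqrt J=\m_B$ (the only homogeneous prime of dimension $0$), so $H^{d}_J(B)=H^{d}_{\m_B}(B)\neq 0$, contradicting the hypothesis. If $\dim B/J=1$, pick a minimal prime $\mathfrak{p}$ of $J$ with $\dim B/\mathfrak{p}=1$; localizing at $\mathfrak{p}$ gives $H^{d-1}_J(B)_{\mathfrak{p}}=H^{d-1}_{\mathfrak{p}B_{\mathfrak{p}}}(B_{\mathfrak{p}})\neq 0$, again by Grothendieck non-vanishing for the $(d-1)$-dimensional local ring $B_{\mathfrak{p}}$, contradicting $H^{d-1}_J(B)=0$. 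Hence $\dim B/J\geq 2$, and in particular every minimal prime of $J$ is relevant (none equals $\m_B$).

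For connectedness of $\Proj B/J$: suppose it is disconnected, say a disjoint union of two nonempty open subsets $Y_1$ and $Y_2$. Each irreducible component of $\Proj B/J$ is $V_+(\mathfrak{p})$ for a minimal prime $\mathfrak{p}$ of $J$, is connected, and hence lies in exactly one $Y_i$; this partitions the finitely many minimal primes of $J$ into nonempty sets $S_1$ and $S_2$. Set $\aF=\bigcap_{\mathfrak{p}\in S_1}\mathfrak{p}$ and $\mathfrak{b}=\bigcap_{\mathfrak{p}\in S_2}\mathfrak{p}$. Then $\aF\cap\mathfrak{b}=\sqrt J$, both $\dim B/\aF$ and $\dim B/\mathfrak{b}$ are positive, and $\aF+\mathfrak{b}$ is contained in $\m_B$ but in no relevant prime (such a prime would lie in $Y_1\cap Y_2=\emptyset$), so $\sqrt{\aF+\mathfrak{b}}=\m_B$. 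Now the Mayer--Vietoris sequence for local cohomology (see \cite{BS}) gives
\[
\cdots\to H^{d-1}_{\aF}(B)\oplus H^{d-1}_{\mathfrak{b}}(B)\to H^{d-1}_{\aF\cap\mathfrak{b}}(B)\to H^{d}_{\aF+\mathfrak{b}}(B)\to H^{d}_{\aF}(B)\oplus H^{d}_{\mathfrak{b}}(B)\to\cdots,
\]
in which $H^{d}_{\aF}(B)=H^{d}_{\mathfrak{b}}(B)=0$ by Theorem \ref{m-1} and $H^{d}_{\aF+\mathfrak{b}}(B)=H^{d}_{\m_B}(B)\neq 0$; hence the connecting homomorphism is surjective, so $H^{d-1}_J(B)=H^{d-1}_{\aF\cap\mathfrak{b}}(B)\neq 0$, again contradicting the hypothesis. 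Therefore $\Proj B/J$ is connected.

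The genuinely hard input is Theorem \ref{m-1}, which we are allowed to assume; given it, the obstacle I expect is the bookkeeping in the connectedness step, carried out over a ring $B$ which for $r\geq 2$ is highly non-reduced and is not standard graded over a field. One must check with care that $\m_B$ is the only prime containing $B_+$, that minimal primes of a homogeneous ideal are homogeneous and lie inside $\m_B$, that the passage from a disconnection of $\Proj B/J$ to ideals $\aF,\mathfrak{b}$ with $\sqrt{\aF+\mathfrak{b}}=\m_B$ and $\sqrt{\aF\cap\mathfrak{b}}=\sqrt J$ behaves as in the classical situation, and that Theorem \ref{m-1} is legitimately applicable to $\aF$ and $\mathfrak{b}$ through the identification $B\cong G_{(\widehat{\m})^r}(\widehat{A})$. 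An alternative to the Mayer--Vietoris argument would be to transfer the problem, via $B_{\mathrm{red}}=G_\m(A)^{(r)}$ and a comparison $\Proj B/J\cong\Proj G_\m(A)/I$ (with $\dim B/J=\dim G_\m(A)/I$) for a suitable homogeneous ideal $I$, to the classical second vanishing theorem \cite[7.5]{H} for the polynomial ring $G_\m(A)\cong k[X_1,\ldots,X_d]$; it is this route that makes essential use of the equi-characteristic and separably closed hypotheses, but it requires additionally handling the nilpotent filtration of $B$ and the $r$-uple Veronese, which does not appear to be simpler.
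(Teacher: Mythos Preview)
Your proof is correct and takes a genuinely different, and in fact more elementary, route than the paper's.

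The paper does not argue directly on $B=G_{\m^r}(A)$. Instead it lifts $J$ to $K$ in $\R(\m^r)$, builds $K^\sharp$ in $\R(\m)$ via Construction~\ref{const}, and then analyzes the long exact sequence coming from $0\to G_\m(A)\to W^\m(A)(1)\xrightarrow{t^{-1}}W^\m(A)\to 0$. From the hypothesis $H^{d-1}_J(B)=H^d_J(B)=0$ it extracts only that the relevant kernel/cokernel modules $U_{d-1},U_d,V_{d-2},V_{d-1}$ vanish in degrees lying in a single residue class modulo $r$. The key step, and the reason the paper imposes equi-characteristic, is then a rigidity argument: these $U_i,V_i$ are graded $F_G$-finite $F_G$-modules (char $p$) or generalized Eulerian holonomic $A_d(k)$-modules (char $0$) over $G=G_\m(A)\cong k[X_1,\dots,X_d]$, and by Corollary~\ref{cor-inf-vanish} a module of this type vanishing in infinitely many positive and infinitely many negative degrees must vanish identically. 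This forces $H^j_I(G_\m(A))=0$ for $j\geq d-1$, after which the classical second vanishing theorem over the polynomial ring and Proposition~\ref{const-prop} transfer the conclusion back to $G_{\m^r}(A)$.

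Your argument bypasses all of this machinery: you use Grothendieck non-vanishing (after checking that $B$ is catenary with $B_{\mathrm{red}}\cong G_\m(A)^{<r>}$ a domain, so $\operatorname{ht}\mathfrak{p}+\dim B/\mathfrak{p}=d$ for every prime $\mathfrak{p}$) to handle the dimension inequality, and Mayer--Vietoris together with Theorem~\ref{m-1} applied directly to $B$ to handle connectedness. What this buys you is significant: your proof uses neither the equi-characteristic assumption nor the separably closed assumption, so it actually strengthens Theorem~\ref{m-2c} to match the generality of Theorem~\ref{m-2}. What the paper's route buys is a demonstration that the $W^\m(A)$ machinery and the $D$-module/$F$-module rigidity of Section~6--7 can be run in reverse; that technique may be of independent interest, but for the bare statement your argument is both shorter and stronger.
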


We now describe in brief the contents of this paper. In section two we discuss a construction that we need. In section three we discuss an infinitely generated $\R =A[It]$-module $W^I(A)$ that plays a crucial role in our proofs. In section four we prove Theorem \ref{m-1}. In section five we prove Theorem \ref{m-2}. {In the next section we discuss some preliminary results that is needed to prove
Theorem \ref{m-2c}. In section seven we discuss $F$-module or $D$-module structure on $W^\m(A)$. In section eight we give proof of Theorem \ref{m-2c}.

\section{A construction}
In this section $A$ is a Noetherian local ring, $I$ an ideal of $A$. Let $\R(I) = A[It]$ be the Rees algebra of $I$.  Let $G_I(A) = \bigoplus_{n \geq 0}I^n/I^{n+1}$ be the associated graded ring of $I$. Fix $r \geq 1$. In this section we discuss a construction of to relate homogeneous ideals in $G_{I^r}(A)$ with a certain ideal in $G_I(A)$.
The strategy is to use the Rees algebra. We note that ideals in $G_I(A)$ correspond to ideals of $\R(I)$ containing $It^0$.
Throughout $\ ^{<r>}$ denotes the $r^{th}$-Veronese functor. We note that the associated graded ring does not behave well \wrt \ the Veronese functor. However the Rees algebra does.
We have $\R(I)^{<r>} = \R(I^r)$.
\begin{construction}\label{const}
  Let $Q$ be a homogeneous ideal in $G_{I^r}(A)$. Let $\R(I^r) = A[I^r u]$. Let $\eta \colon \R(I^r) \rt G_{I^r}(A)$ be the natural map.  Let $K = \eta^{-1}(Q)$.
  Say $K = (\q u^0, k_1u^{a_1}, k_2u^{a_2}, \cdots, k_su^{a_s})$. Here $k_i \in I^{a_ir}$  (with $a_i \geq 1$) and $\q$ is an ideal of $A$ containing $I^r$.
  Let $a = lcm \{ a_i \}_{1\leq i \leq s}$. Consider
  the following ideal of $\R(I^r)$
  \[
  \wt{K} = (\q u^0, c_1u^a, c_2u^a, \cdots, c_su^a) \quad \text{where} \ c_i = k_i^{a/a_i} \ \text{for} \ i = 1, \ldots,s.
  \]

  Consider the following ideal of $\R(I) = A[It]$;
  \[
  K^\sharp = (\q + I, c_1t^{ar}, c_2t^{ar}, \cdots, a_st^{a_sr}).
  \]
\end{construction}

Our construction has the following properties:
\begin{proposition}\label{const-prop}(with hypotheses as in \ref{const}). We have
  \begin{enumerate}[\rm (1)]
  \item $\sqrt{\wt{K}} = \sqrt{K}$
  \item $(K^\sharp)^{<r>} =  (\q + I, c_1u^a, c_2u^a, \cdots, c_su^a) $.
  \item $ \sqrt{(K^\sharp)^{<r>}} = \sqrt{\wt{K}} = \sqrt{K}$.
  \item $\dim A[It]/ K^\sharp = \dim A[I^ru]/K$.
  \item $\Proj A[It]/ K^\sharp$ is connected if and only if $\Proj A[I^ru]/K$ is connected.
  \item $K^\sharp$ contains $It^0$. Let $\xi \colon A[It] \rt G_I(A)$ be the natural map. Then
  $A[It]/ K^\sharp = G_I(A)/\xi(K^\sharp)$.
\end{enumerate}
\end{proposition}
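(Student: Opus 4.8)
The plan is to verify the six assertions essentially by unwinding the definitions in Construction~\ref{const}, using the elementary fact that passing to a radical, to dimension, or to connectedness of $\Proj$ is insensitive to replacing generators $k_iu^{a_i}$ by their powers $c_iu^a = k_i^{a/a_i}u^a$, and that the $r$-th Veronese is a well-behaved operation. First I would observe that (1) is purely a statement inside $\R(I^r)$: since $c_iu^a$ is a power (up to sign, a unit multiple) of $k_iu^{a_i}$ in the graded ring $\R(I^r)$, we have $c_iu^a \in \sqrt{(k_iu^{a_i})}$, hence $\wt{K}\sub\sqrt{K}$; conversely $k_iu^{a_i}$ is nilpotent modulo $\wt{K}$ because $(k_iu^{a_i})^{a/a_i}=k_i^{a/a_i}u^a=c_iu^a\in\wt K$ (here I use that $a/a_i$ is a positive integer as $a=\mathrm{lcm}\{a_j\}$), so $K\sub\sqrt{\wt K}$. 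Thus $\sqrt{\wt K}=\sqrt K$. For (2) I would simply compute $(K^\sharp)^{<r>}$ generator by generator: the degree-zero part $\q+I$ survives (recall $(\text{degree }0)^{<r>}$ is unchanged), and $c_it^{ar}$ has $t$-degree $ar$ which is a multiple of $r$, so it contributes $c_iu^a$ to the Veronese; the spurious-looking generator $a_st^{a_sr}$ in the displayed formula for $K^\sharp$ is a typo for $c_st^{ar}$ (consistent with all the $c_i$ having the same $u$-degree $a$ in $\wt K$), so the Veronese is exactly $(\q+I, c_1u^a,\dots,c_su^a)$ as claimed.

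Next, (3) combines (2) with (1): by (2) the ring $(K^\sharp)^{<r>}$ is $(\q+I, c_1u^a,\dots,c_su^a)$, which differs from $\wt K=(\q u^0, c_1u^a,\dots,c_su^a)$ only in that $\q$ is replaced by $\q+I$; but $\q\supseteq I^r$ so $\sqrt{\q}\supseteq\sqrt{\q+I}\supseteq\sqrt{I^r+\q}=\sqrt\q$, giving $\sqrt{(K^\sharp)^{<r>}}=\sqrt{\wt K}$, and then (1) gives $=\sqrt K$. For (4) I would use that the $r$-th Veronese of a $\Z$-graded ring has the same dimension (Veronese inclusion $R^{<r>}\hookrightarrow R$ is an integral extension on the subring generated in each fixed residue class, or more cleanly $\Proj R=\Proj R^{<r>}$ and the irrelevant-ideal-adjusted dimension matches), so $\dim \R(I)/K^\sharp = \dim (\R(I)/K^\sharp)^{<r>} = \dim \R(I^r)/(K^\sharp)^{<r>}$, and by (3) this has the same dimension as $\R(I^r)/\sqrt K = \R(I^r)/K$ up to radical, i.e.\ equals $\dim \R(I^r)/K$. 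Assertion (5) is the homeomorphism $\Proj R\cong\Proj R^{<r>}$ applied to $R=\R(I)/K^\sharp$: we get $\Proj \R(I)/K^\sharp\cong\Proj(\R(I)/K^\sharp)^{<r>}=\Proj\R(I^r)/(K^\sharp)^{<r>}$, and since $\Proj$ depends only on the radical this is homeomorphic to $\Proj\R(I^r)/K$; connectedness is a topological invariant, so the two agree.

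Finally, (6): by construction each generator $c_it^{ar}$ of $K^\sharp$ has positive $t$-degree $ar\geq 1$, and the degree-zero component of $K^\sharp$ is $\q+I\supseteq I$, so $It^0\sub K^\sharp$; since the kernel of $\xi\colon A[It]\to G_I(A)$ is precisely the ideal generated by $It^0$ (that is, $It$ in degree $0$... more precisely $\bigoplus_{n\geq0} I^{n+1}t^n$, whose degree-zero part is $It^0$ and which is contained in any ideal containing $It^0$ because $I^{n+1}t^n=(It^0)(I t)^{\,n}\cdot t^{-0}$... I would phrase this as: $\ker\xi\sub K^\sharp$), the surjection $A[It]/\ker\xi=G_I(A)\twoheadrightarrow A[It]/K^\sharp$ shows $A[It]/K^\sharp=G_I(A)/\xi(K^\sharp)$.

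I expect the only genuine subtlety, rather than an obstacle, is pinning down the precise description of $\ker\xi$ and checking $\ker\xi\sub K^\sharp$ in part (6), together with being careful about the standard (but slightly fussy) statements relating $\dim$ and $\Proj$ of a graded ring to those of its Veronese subring; everything else is formal manipulation of generators. I would also want to double-check the apparent typo $a_st^{a_sr}\leftrightarrow c_st^{ar}$ against the rest of the paper, since (2) as stated forces the latter.
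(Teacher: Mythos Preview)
Your proposal is correct and follows essentially the same route as the paper's own proof: the paper dismisses (1) and (6) as ``clear'' (you supply the details the paper omits), and for (2)--(5) both arguments use that the generators of $K^\sharp$ all lie in degrees divisible by $r$, that $\sqrt{\q+I}=\sqrt{\q}$ since $I^r\subseteq\q$, and that the Veronese inclusion $S^{<r>}\hookrightarrow S$ is integral with $\Proj S\cong\Proj S^{<r>}$. Your identification of $a_st^{a_sr}$ as a typo for $c_st^{ar}$ is also consistent with the paper's proof of (2), which treats all the nonzero-degree generators uniformly as $c_jt^{ar}$.
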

\begin{proof}
(1) This is clear.

(2) Set $J = (\q + I, c_1u^a, c_2u^a, \cdots, c_su^a) $. Clearly $J \subseteq (K^\sharp)^{<r>} $.
Let $\xi \in (K^\sharp)^{<r>}_n = K^\sharp_{nr}$. We have
$$ \xi = \sum_{i =1}^{m}\alpha_i w_it^0 + \sum_{j = 1}^{l}\beta_jc_jt^{ar},$$
where $w \in q + I$ and $\beta_j \in I^{nr - ar}$. Note $\beta_j \in I^{r(n-a)}$. It follows that
$$ \xi = \sum_{i =1}^{m}\alpha_i w_iu^0 + \sum_{j = 1}^{l}\beta_jc_ju^a  \in J.$$
The result follows.

(3) We note that $\sqrt{\q + I} = \sqrt{\q} $ as $\q \supseteq I^r$. It follows that $ \sqrt{(K^\sharp)^{<r>}} = \sqrt{\wt{K}}$. The second equality follows from (1).

(4), (5).  Let $S =  A[It]/ K^\sharp$. Them $S^{<r>} = A[I^ru]/(K^\sharp)^{<r>}$.  We note that $S^{<r>} \subseteq S$ is a finite integral extension. It follows that
\[
\dim  A[It]/ K^\sharp = \dim A[I^ru]/(K^\sharp)^{<r>} = \dim A[I^ru]/K.
\]
Here the second equality follows from (3).

We also have $\Proj S \cong \Proj S^{<r>}$. We also have that by (3) we get as topological spaces (but not as schemes)
$$ \Proj A[I^ru]/(K^\sharp)^{<r>} = \Proj A[I^ru]/K.$$
The result follows.

(6) This is clear.
\end{proof}

\section{A module over the Rees algebra}
In this section we recall construction of a non-finitely generated module over the Rees algebra  which has proved useful in study of associated graded rings.
\s Let  $(A, \m)$ be Noetherian local ring and let $I$ be an ideal of $A$. Let $\R(I) = A[It]$ be the Rees algebra of $I$. Note $\R$ is a subring of $A[t]$.
Set $W^I(A) = \bigoplus_{n \geq 1}A/I^n = A[t]/\R(I)$. Then $W^I(A)$ is an $\R(I)$-module. Note it is \emph{not}  finitely generated as an $\R(I)$-module.
Let $\Sc(I) = A[It, t^{-1}]$ be the extended Rees algebra of $I$. Note it is a subring of $A[t, t^{-1}]$ and $W^I(A) = A[t, t^{-1}]/\Sc(I)$ is also an $\Sc(I)$-module. Furthermore the $\R(I)$-module structure on $W^I(A)$ is through the restriction of scalars along the inclusion $\R(I) \rt \Sc(I)$

\begin{remark}
In \cite{Pu5} we defined $L^I(A) = \bigoplus_{n \geq 0}A/I^{n+1} = W^I(A)(1)$. However in this paper we deal with $W^I(A)$. For the reason of this see, \ref{rachel}.
\end{remark}

\s \label{first}
We have exact sequence $$ 0 \rt I^n/I^{n+1}  \rt A/I^{n+1} \rt A/I^n \rt 0$$
for all $n \geq 0$. So we have an exact sequence of $\Sc(I)$-modules (and hence of $\R(I)$-modules).
\begin{equation*}
  0 \rt G_I(A) \rt W^I(A)(1) \xrightarrow{t^{-1}} W^I(A) \rt 0. \tag{$\dagger$}
\end{equation*}
\s $W^I(A)$ behaves well \wrt \ the Veronese functor. We have an equality of $\R(I^r) = \R(I)^{<r>}$-modules
$$ (W^I(A))^{<r>} = \bigoplus_{n \geq 1}A/I^{nr}  = W^{I^r}(A). $$
\section{Proof of Theorem \ref{m-1}}
In this section we give
\begin{proof}[Proof of Theorem \ref{m-1}]
Let  $J$ be an ideal of $G_{P^r}(A)$ with $\dim G_{P^r}(A)/J > 0$. Let $\eta \colon A[P^ru] \rt G_{P^r}(A)$ be the natural map. Let $K = \eta^{-1}(J)$. We make the construction as in \ref{const}.
By \ref{const-prop} $\dim A[Pt]/K^\sharp > 0$. Let  $\xi \colon A[Pt] \rt G_P(A)$ be the natural map. Set $I = \xi(K^\sharp)$. Then by \ref{const-prop};
  $A[Pt]/ K^\sharp = G_P(A)/I$. So we have $H^d_{I}(G_P(A))  = 0$. The exact sequence $(\dagger)$ induces exact sequence in cohomology
  $$\cdots  \rt H^i_I(G_P(A)) \rt H^i_{K^\sharp}(W^P(A))(1) \xrightarrow{t^{-1}} H^i_{K^\sharp}(W^P(A)) \rt H^{i+1}_I(G_P(A)) \rt \cdots. $$
As $H^d_I(G_P(A)) = 0$ we get the maps
\begin{enumerate}
  \item $H^{d-1}_{K^{\sharp}}(W^P(A))_{n+1} \xrightarrow{t^{-1}} H^{d-1}_{K^{\sharp}}(W^P(A))_{n}$ is surjective for all $n \in \Z$; and
  \item $H^{d}_{K^{\sharp}}(W^P(A))_{n+1}  \xrightarrow{t^{-1}} H^{d}_{K^{\sharp}}(W^P(A))_{n}$ is injective for all $n \in \Z$.
\end{enumerate}
As  local cohomology behaves well \wrt \ the Veronese functor we have
for all $i \geq 0$
$$H^i_{K^\sharp}(W^{P}_A)^{<r>}  = H^i_{(K^\sharp)^{<r>}}(W^{P^r}_A).$$
So we have
$$H^i_{(K^\sharp)^{<r>}}(W^{P^r}_A)_n = H^i_{K^\sharp}(W^{P}_A)_{nr}. $$
Also note that
$$H^i_{(K^\sharp)^{<r>}}(W^{P^r}_A)(1)_n = H^i_{(K^\sharp)^{<r>}}(W^{P^r}_A)_{n+1} = H^i_{K^\sharp}(W^{P}_A)_{(n+1)r}. $$
We have a surjective map
$$ H^{d-1}_{K^{\sharp}}(W^P(A))_{nr + r} \xrightarrow{t^{-r}} H^{d-1}_{K^{\sharp}}(W^P(A))_{nr},  \quad \text{for all}  \ n \in \Z. $$
Notice that multiplication by $t^{-r}$ is same as multiplication by $u^{-1}$. So we have surjective maps
$$ H^{d-1}_{(K^\sharp)^{<r>}}(W^{P^r}(A))_{n + 1} \xrightarrow{u^{-1}} H^{d-1}_{(K^\sharp)^{<r>}}(W^{P^r}(A))_{n},  \quad \text{for all}  \ n \in \Z. $$
Similarly we have injective maps
$$ H^{d}_{(K^\sharp)^{<r>}}(W^{P^r}(A))_{n + 1} \xrightarrow{u^{-1}} H^{d}_{(K^\sharp)^{<r>}}(W^{P^r}(A))_{n},  \quad \text{for all}  \ n \in \Z. $$
We have a short exact sequence of $A[P^ru]$-modules
$$ 0 \rt G_{P^r}(A) \rt W^{P^r}(A)(1) \xrightarrow{u^{-1}} W^{P^r}(A) \rt 0.$$
Taking local cohomology \wrt $(K^\sharp)^{<r>}$ and the above discussion we have
$$H^d_{(K^\sharp)^{<r>}}(G_{P^r}(A)) = 0.$$
By \ref{const-prop}, we get that
$$H^i_J(G_{P^r}(A))  \cong H^i_{(K^\sharp)^{<r>}}(G_{P^r}(A)) \quad \text{for all} \ i \geq 0.$$
The result follows.
\end{proof}
\section{Proof of Theorem \ref{m-2}}
In this section we give
\begin{proof}[Proof of Theorem \ref{m-2}]
By \ref{m-1} we get that $H^{d}_J(G_{\m^r}(A)) = 0$. Let $\eta \colon A[\m^ru] \rt G_{\m^r}(A)$ be the natural map. Let $K = \eta^{-1}(J)$. We make the construction as in \ref{const}.
By \ref{const-prop} $\dim A[\m t]/K^\sharp \geq 2$ and $\Proj A[\m t]/K^\sharp$ is connected. Let  $\xi \colon A[\m t] \rt G_\m(A)$ be the natural map. Set $I = \xi(K^\sharp)$. Then by \ref{const-prop};
  $A[\m t]/ K^\sharp = G_\m(A)/I$. So we have $H^j_{I}(G_\m(A))  = 0$  for $j \geq d - 1$. The exact sequence $(\dagger)$ induces exact sequence in cohomology
  $$\cdots  \rt H^i_I(G_\m(A)) \rt H^i_{K^\sharp}(W^\m(A))(1) \xrightarrow{t^{-1}} H^i_{K^\sharp}(W^\m(A)) \rt H^{i+1}_I(G_\m(A)) \rt \cdots. $$
As $H^{d-1}_I(G_P(A)) = 0$ we get the maps
\begin{enumerate}
  \item $H^{d-2}_{K^{\sharp}}(W^\m(A))_{n+1} \xrightarrow{t^{-1}} H^{d-2}_{K^{\sharp}}(W^\m(A))_{n}$ is surjective for all $n \in \Z$; and
  \item $H^{d-1}_{K^{\sharp}}(W^\m(A))_{n+1}  \xrightarrow{t^{-1}} H^{d-1}_{K^{\sharp}}(W^\m(A))_{n}$ is injective for all $n \in \Z$.
\end{enumerate}
By an argument similar to proof of Theorem \ref{m-1} we have
 surjective maps
$$ H^{d-2}_{(K^\sharp)^{<r>}}(W^{\m^r}(A))_{n + 1} \xrightarrow{u^{-1}} H^{d-2}_{(K^\sharp)^{<r>}}(W^{\m^r}(A))_{n},  \quad \text{for all}  \ n \in \Z. $$
Similarly we have injective maps
$$ H^{d-1}_{(K^\sharp)^{<r>}}(W^{\m^r}(A))_{n + 1} \xrightarrow{u^{-1}} H^{d-1}_{(K^\sharp)^{<r>}}(W^{\m^r}(A))_{n},  \quad \text{for all}  \ n \in \Z. $$
We have a short exact sequence of $A[\m^ru]$-modules
$$ 0 \rt G_{\m^r}(A) \rt W^{\m^r}(A)(1) \xrightarrow{u^{-1}} W^{\m^r}(A) \rt 0.$$
Taking local cohomology \wrt $(K^\sharp)^{<r>}$ and the above discussion we have
$$H^{d-1}_{(K^\sharp)^{<r>}}(G_{\m^r}(A)) = 0.$$
By \ref{const-prop} it follows that
$$H^i_J(G_{\m^r}(A))  \cong H^i_{(K^\sharp)^{<r>}}(G_{\m^r}(A)) \quad \text{for all} \ i \geq 0.$$
The result follows.
\end{proof}
\section{Some preliminaries to prove Theorem \ref{m-2c}}
We need a few preliminaries to prove Theorem \ref{m-2c}.
\s Assume $\charp K = 0$. Consider the Weyl algebra
$$A_m(K) = K<X_1,\ldots, X_m, \partial_1, \ldots, \partial_m>.$$
 We give the grading on $A_m(K)$ as $\deg X_i = 1$ and $\deg \partial_i = -1$.   Let $\mathcal{E} = \sum_{i = 1}^{m} X_i \partial_i$ be the Eulerian operator. We say a graded $A_m(K)$-module is  generalized Eulerian if for a homogeneous $m \in M$ there exists  integer $a$ (depending on $m$) such that $(\mathcal{E} - \deg m)^am = 0$.

 \s Assume $\charp K = p > 0$. Then we have the Frobenius map $F \colon R \rt R$ given by
 $F(r) = r^p$. There is a notion of $F$ and $F$-finite $R$-modules, see \cite{Lyu-2}.

\begin{theorem}\label{tame-body}[see \cite[5,1, 6.1]{P}, \cite[7.1, 7.2]{P2}]
 Let $R = K[X_1,\ldots, X_m]$ be standard graded with
  $K$ an infinite field.
 Let $\M = \bigoplus_{n \in \Z} \M_n$ be a graded $R$-module. If $\charp K  = p > 0$ assume  $\M$ is a graded $F_R$-finite $F_R$-module. If $\charp K = 0$ assume $\M$ is a graded generalized Eulerian, holonomic $A_m(K)$-module.
   Then we have
\begin{enumerate}[\rm (a)]
\item
If $\M_n = 0$ for $|n| \gg 0$ then $\M = 0$.
\item
The following assertions are equivalent:
\begin{enumerate}[\rm(i)]
\item
$\M_n \neq 0$ for infinitely many $n \ll 0$.
\item
There exists $r$  such that $\M_n \neq 0$ for all $n \leq r$.
\item
$\M_n \neq 0$ for all $n \leq -m$.
\item
$\M_n \neq 0$ for some $n \leq -m$.
\end{enumerate}
\item
The following assertions are equivalent:
\begin{enumerate}[\rm(i)]
\item
$\M_n \neq 0$ for infinitely many $n \gg 0$.
\item
There exists $r$  such that $\M_n \neq 0$ for all $n \geq r$.
\item
$\M_n \neq 0$ for all $n \geq 0$.
\item
$\M_n \neq 0$ for some $n \geq 0$.
\end{enumerate}
\end{enumerate}
\end{theorem}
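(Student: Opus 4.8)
The plan is to reduce the statement to a short combinatorial argument about the set $S := \{\, n \in \mathbb{Z} : \M_n \neq 0 \,\}$, the real content being the extraction from the hypotheses of two propagation rules: \textbf{(A)} if $n \in S$ and $n \neq 0$ then $n-1 \in S$; and \textbf{(B)} if $n \in S$ and $n \neq -m$ then $n+1 \in S$.

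In characteristic zero I would derive (A) and (B) from the generalized Eulerian hypothesis, using nothing else (in particular, holonomicity is not needed for this deduction). Write $\mathcal{E} = \sum_{i=1}^{m} X_i \partial_i$; since $\partial_i X_i = X_i\partial_i + 1$ in $A_m(K)$ we also have $\sum_{i=1}^m \partial_i X_i = \mathcal{E} + m$. Both $\mathcal{E}$ and $\mathcal{E}+m$ are homogeneous of degree $0$, and the generalized Eulerian condition says precisely that $\mathcal{E}-n$ acts locally nilpotently on $\M_n$; hence $\mathcal{E}$ acts invertibly on $\M_n$ whenever $n \neq 0$, and $\mathcal{E}+m$ acts invertibly on $\M_n$ whenever $n \neq -m$. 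Consequently, for $n \neq 0$,
\[
\M_n \;=\; \mathcal{E}\,\M_n \;=\; \sum_{i=1}^m X_i\partial_i\,\M_n \;\subseteq\; \sum_{i=1}^m X_i\,\M_{n-1} \;\subseteq\; \M_n,
\]
so $\M_n = \sum_i X_i\M_{n-1}$, giving (A); and for $n \neq -m$ the same computation with $\mathcal{E}+m$ gives $\M_n = \sum_i \partial_i\M_{n+1}$, giving (B). In characteristic $p$ I would run the parallel argument through the $F_R$-module structure: fix a root $\M_0$, i.e.\ a finitely generated graded submodule with $\M = \varinjlim_e F_R^e(\M_0)$, and track degrees through the structure morphism $\M_0 \to F_R(\M_0)$ to recover (A) and (B). I expect this characteristic-$p$ bookkeeping, rather than anything else in the argument, to be the main obstacle; it is essentially the content of \cite[5.1, 6.1]{P} and \cite[7.1, 7.2]{P2}.

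Granting (A) and (B), the theorem becomes pure combinatorics on $S \subseteq \mathbb{Z}$, using only $m \geq 1$. If $n_0 \in S$ with $n_0 \leq -1$, iterating (A) gives $\mathbb{Z}_{\leq n_0} \subseteq S$, and if moreover $n_0 \leq -m$ then iterating (B) from $n_0$ up to $-m$ and combining shows $\mathbb{Z}_{\leq -m} \subseteq S$; this yields the implication (iv) $\Rightarrow$ (iii) of part (b), while (iii) $\Rightarrow$ (ii) $\Rightarrow$ (i) $\Rightarrow$ (iv) are immediate, proving (b). Symmetrically, if $n_0 \in S$ with $n_0 \geq 0$, iterating (B) gives $\mathbb{Z}_{\geq n_0} \subseteq S$ and iterating (A) down to $0$ gives $0 \in S$, so $\mathbb{Z}_{\geq 0} \subseteq S$, which yields (iv) $\Rightarrow$ (iii) of part (c); the rest of (c) follows likewise. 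Finally, if $n_0 \in S$ with $-m < n_0 < 0$ then both iterations apply and force $S = \mathbb{Z}$. Since $\mathbb{Z} = \mathbb{Z}_{\leq -m} \sqcup \{-m+1,\dots,-1\} \sqcup \mathbb{Z}_{\geq 0}$, a nonempty $S$ meets one of these three pieces and is therefore infinite, so $\M_n = 0$ for $|n| \gg 0$ forces $\M = 0$; this is (a).
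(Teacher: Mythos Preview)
The paper does not prove this theorem at all; it is quoted verbatim from \cite[5.1, 6.1]{P} and \cite[7.1, 7.2]{P2} and then used as a black box (only through Corollary~\ref{cor-inf-vanish}). So there is no ``paper's own proof'' to compare against.

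That said, your characteristic-zero argument is correct and is essentially the standard one. The key observation that $\mathcal{E}-n$ locally nilpotent on $\M_n$ forces $\mathcal{E}$ (respectively $\mathcal{E}+m$) to be surjective on $\M_n$ for $n\neq 0$ (respectively $n\neq -m$) is exactly right, and the deduction $\M_n=\sum_i X_i\M_{n-1}$ and $\M_n=\sum_i\partial_i\M_{n+1}$ gives your propagation rules (A) and (B). The subsequent combinatorics on $S$ is clean and handles all three parts, including the degenerate case $m=1$ where the middle interval $\{-m+1,\ldots,-1\}$ is empty. One cosmetic point: you only use surjectivity of $\mathcal{E}$ and $\mathcal{E}+m$, not invertibility, so the word ``invertibly'' overstates what you need (though it is also true).

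In characteristic $p$ you have, by your own admission, only a sketch: writing $\M=\varinjlim F^e(\M_0)$ and ``tracking degrees'' is not yet an argument, and the propagation barriers at $0$ and $-m$ do not fall out of the $F$-module structure as transparently as they do from the Euler operator. You correctly identify this as the substantive content of the cited references, so your proposal in positive characteristic amounts to the same citation the paper makes. If you want a self-contained proof you will have to actually carry out that bookkeeping; otherwise what you have written is a complete proof only in characteristic zero.
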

As a consequence we get:
\begin{corollary}\label{cor-inf-vanish}
(with hypotheses as in \ref{tame-body}) If $\M_n = 0$ for infinitely many $n \gg 0$ and infinitely many $n \ll 0$ then $\M = 0$.
\end{corollary}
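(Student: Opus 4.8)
The plan is to derive this immediately from Theorem \ref{tame-body}, using part (a) together with the equivalences in parts (b) and (c). The guiding observation is that parts (b) and (c) each allow us to upgrade the assertion ``$\M$ is nonzero in infinitely many small (resp.\ large) degrees'' to ``$\M$ is nonzero in \emph{all} sufficiently small (resp.\ large) degrees'', and we play this against the given vanishing hypothesis.

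First I would show $\M_n = 0$ for all $n \gg 0$. Suppose not; then $\M_n \neq 0$ for infinitely many $n \gg 0$, which is exactly condition (c)(i) of Theorem \ref{tame-body}. By the equivalence (c)(i) $\Leftrightarrow$ (c)(iii), this forces $\M_n \neq 0$ for \emph{all} $n \geq 0$. But by hypothesis $\M_n = 0$ for infinitely many $n \gg 0$, in particular for some particular $n \geq 0$, a contradiction. Hence $\M_n = 0$ for all $n \gg 0$. Symmetrically, using part (b) in place of part (c): if $\M_n \neq 0$ for infinitely many $n \ll 0$, then condition (b)(i) holds, so by (b)(i) $\Leftrightarrow$ (b)(iii) we would get $\M_n \neq 0$ for all $n \leq -m$, contradicting the hypothesis that $\M_n = 0$ for infinitely many $n \ll 0$. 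Hence $\M_n = 0$ for all $n \ll 0$ as well.

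Combining the two conclusions, $\M_n = 0$ for $|n| \gg 0$, and then part (a) of Theorem \ref{tame-body} yields $\M = 0$. I do not expect any genuine obstacle: the substance is entirely contained in Theorem \ref{tame-body}. The only subtlety to keep in mind is that the hypothesis ``$\M_n = 0$ for infinitely many $n$'' is strictly weaker than the negation of condition (i) in parts (b)/(c) (it does not say $\M$ vanishes in all large degrees), so one must not try to negate those conditions directly; instead one uses the equivalence with the ``for all $n$'' conditions (iii) to produce a contradiction from the vanishing of a single graded piece.
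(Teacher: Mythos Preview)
Your argument is correct and is exactly the intended derivation: the paper states this corollary without proof, as an immediate consequence of Theorem \ref{tame-body}, and your reasoning supplies precisely the contrapositive-of-(b)(iii)/(c)(iii) step followed by an appeal to part (a). There is nothing to add.
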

\section{$D$-module or $F_R$-module structure on $W$}
In this section we assume $A = k[[X_1, \ldots, X_d]], \m = (X_1, \ldots, X_d)$.   Let $G = G_\m(A)$ be the associated graded ring of $A$ \wrt \ $\m$. We note that
$G \cong k[X_1, \ldots, X_d]$.
Let $\R = A[\m t]$ be the Rees algebra of $A$ \wrt \ $\m$ and let $\Sc = A[\m t,t^{-1}]$ be the extended Rees algebra of $A$ \wrt \ $\m$. Let $I$ be a homogeneous  ideal in $\R$.
The exact sequence of $\Sc$-modules (and hence of $\R$-modules)
$$ 0 \rt G \rt W^\m(A)(1)  \xrightarrow{t^{-1}} W^\m(A) \rt 0$$
induces exact sequence in cohomology
$$ \cdots \rt H^i_I(G) \rt H^i_I(W^\m(A))(1) \xrightarrow{t^{-1}} H^i_I(W^\m(A)) \rt H^{i+1}_I(G) \rt \cdots.$$
Let $$U_i =\ker (H^i_I(W^\m(A))(1) \xrightarrow{t^{-1}} H^i_I(W^\m(A))),  \text{and} $$
$$ V_i =\coker (H^i_I(W^\m(A))(1) \xrightarrow{t^{-1}} H^i_I(W^\m(A))). $$
For all $i \geq 0$, we have short-exact sequences of $G$-modules
$$ 0 \rt V_{i-1} \rt H^i_I(G) \rt U_i \rt 0.$$
The goal of this section is to study properties of the above exact sequence

 \s Let $\Sc = A[\m t, t^{-1}]$ be the extended Rees algebra of $A$ \wrt \ $\m$. We note that $\Sc$ is a regular ring. Recall $W^\m(A)$ is also a $\Sc$-module
 and  the $\R$-module structure of $W^\m(A)$ is through the restriction of scalars along the inclusion
 $\R \rt \Sc$. We also note that $A[t, t^{-1}] = \Sc_{t^{-1}}$.

 \s\label{char-p} We first consider the case when $\charp k = p > 0$. We note that $\Sc$ is $F_\Sc$-finite $F_\Sc$-module. It follows that $A[t, t^{-1}] = \Sc_{t^{-1}}$ is $F_\Sc$-finite $F_\Sc$-module. The inclusion $ \Sc \rt \Sc_{t^{-1}}$ is $F_\Sc$-linear. So $W^\m(A)$ is a $F_\Sc$-finite $F_\Sc$-module. If $I$ is any ideal in $\R$ then note that
 $H^i_{I\Sc}(W^\m(A)) \cong H^i_I(W^\m(A))$ as $\R$-modules. We note that $H^i_{I\Sc}(W^\m(A)) $ are $F_\Sc$-finite $F_\Sc$-modules for $i \geq 0$. By \cite[4.1]{P2} it follows that $U_i$ and $V_i$ are
 $F_G$-finite $F_G$-modules.
 \begin{remark}
 We do not know whether $0 \rt V_{i-1} \rt H^i_I(G) \rt U_i \rt 0$ is a short exact sequence of $F_G$-modules.
 \end{remark}
\begin{remark}\label{rachel}
  Over polynomial rings $T$ shifts of graded $F_T$-finite $F_T$-module are NOT graded $F_T$-modules; see \cite[2.5(1), 4.4]{MaZhang}. Although we do not know whether this result hold for $A[\m t, t^{-1}]$ it is perhaps better if we work with $W^\m(A)$ and not with $L^\m(A) = W^\m(A)(1)$.
\end{remark}
 \s \label{char-0}Next we consider the case when $\charp k = 0$. Let $$A_d(k) = k<X_1,\ldots, X_d, \partial_1, \ldots, \partial_d>.$$
 We give the grading on $A_d(K)$ as $\deg X_i = 1$ and $\deg \partial_i = -1$. We note that $\Sc$ is also a  graded $A_d(k)$-module. We also note that $t^{-1}$ commutes with all $\partial_i$. We note that $A[t, t^{-1}]$ is also an $A_d(k)$-module and the inclusion $\Sc \rt A[t, t^{-1}]$ is $A_d(k)$-linear. So $W^\m(A)$ is also an $A_d(k)$-module.
 We note that $ 0 \rt G \rt W^\m(A)(1) \rt W^\m(A) \rt 0$ is a short exact sequence of $A_d(K)$-modules. It follows (for instance using the Cech-complex) that
 $$ \cdots \rt H^i_I(G) \rt H^i_I(W^\m(A))(1) \xrightarrow{t^{-1}} H^i_I(W^\m(A)) \rt H^{i+1}_I(G) \rt \cdots.$$
is an exact sequence of graded  $A_d(k)$-modules. Therefore
$$0 \rt V_{i-1} \rt H^i_I(G) \rt U_i \rt 0 $$
is a short exact sequence of $A_d(k)$-modules. Fix $ i \geq 0$. As $H^i_I(G)$ is a generalized Eulerian, holonomic $A_d(k)$-module it follows that $V_{i-1}$ and $U_i$ are generalized Eulerian, holonomic $A_d(k)$-modules.

 \section{Proof of Theorem \ref{m-2c}}
 In this section we give:
 \begin{proof}[Proof of Theorem \ref{m-2c}]
 Let $\wh{A}$ be the $\m$-adic completion of $A$. We note that $G_\m(A) \cong G_{\wh{\m}}(\wh{A})$ and for every $r \geq 1$ we have $G_{\m^r}(A) \cong G_{\wh{\m^r}}(\wh{A})$. So we may assume that $A$ is complete. Thus $A = k[[X_1, \ldots,X_d]]$. As $k$ is separably closed in particular $k$ is infinite.

 Let $\eta \colon A[\m^r u] \rt G_{\m^r}(A)$ be the natural map. Let $K = \eta^{-1}(J)$. We consider the exact sequence of $\Sc(\m^r) = A[\m^ru,u^{-1}]$-modules (and hence of $\R(\m^r)$-modules)
 $$ 0 \rt G_{\m^r}(A) \rt W^{\m^r}(A)(1) \xrightarrow{u^{-1}} W^{\m^r}(A) \rt 0.$$
 Taking cohomology with respect to $K$ and as $H^j_K(G_{\m^r}(A)) = 0$ for $j = d -1, d$ we have that
the map $H^{d-2}_K(W^{\m^r}(A))(1) \xrightarrow{u^{-1}} H^{d-2}_K(W^{\m^r}(A))$ is surjective, the map $H^{d-1}_K(W^{\m^r}(A))(1) \xrightarrow{u^{-1}} H^{d-1}_K(W^{\m^r}(A))$ is bijective and
 the map \\ $H^{d}_K(W^{\m^r}(A))(1) \xrightarrow{u^{-1}} H^{d}_K(W^{\m^r}(A))$ is injective.

  We make the construction as in \ref{const}.
 Let  $\xi \colon A[\m t] \rt G_\m(A)$ be the natural map. Set $I = \xi(K^\sharp)$.
 The exact sequence $(\dagger)$ induces exact sequence in cohomology
  $$\cdots  \rt H^i_I(G_\m(A)) \rt H^i_{K^\sharp}(W^\m(A))(1) \xrightarrow{t^{-1}} H^i_{K^\sharp}(W^\m(A)) \rt H^{i+1}_I(G_\m(A)) \rt \cdots. $$
  By \ref{const-prop} we have $\sqrt{(K^\sharp)^{<r>}} = \sqrt{K}$.
  Notice that multiplication by $t^{-r}$ is same as multiplication by $u^{-1}$.
  We have a surjective maps
$$ H^{d-2}_{K^{\sharp}}(W^\m(A))_{nr + r} \xrightarrow{t^{-r}} H^{d-2}_{K^{\sharp}}(W^\m(A))_{nr},  \quad \text{for all}  \ n \in \Z. $$
It follows that the map
$$ H^{d-2}_{K^{\sharp}}(W^\m(A))_{nr + 1} \xrightarrow{t^{-1}} H^{d-2}_{K^{\sharp}}(W^\m(A))_{nr},  \quad \text{is surjective for all}  \ n \in \Z. $$
Let $V_{d -2} = \coker(H^{d-2}_{K^{\sharp}}(W^\m(A))(1) \xrightarrow{t^{-1}} H^{d-2}_{K^{\sharp}}(W^\m(A)))$. Then we have \\ $(V_{d-2})_{nr} = 0$ for all $n \in \Z$.

Let $V_{d -1} = \coker(H^{d-1}_{K^{\sharp}}(W^\m(A))(1) \xrightarrow{t^{-1}} H^{d-1}_{K^{\sharp}}(W^\m(A)))$. Then by a similar argument  we have  $(V_{d-2})_{nr} = 0$ for all $n \in \Z$.

We have injective  maps
$$ H^{d-1}_{K^{\sharp}}(W^\m(A))_{nr + r} \xrightarrow{t^{-r}} H^{d-1}_{K^{\sharp}}(W^\m(A))_{nr},  \quad \text{for all}  \ n \in \Z. $$
It follows that the map
$$ H^{d-2}_{K^{\sharp}}(W^\m(A))_{nr + r} \xrightarrow{t^{-1}} H^{d-2}_{K^{\sharp}}(W^\m(A))_{nr + r -1},  \quad \text{is injective for all}  \ n \in \Z. $$
Let $U_{d -1} = \ker(H^{d-1}_{K^{\sharp}}(W^\m(A))(1) \xrightarrow{t^{-1}} H^{d-1}_{K^{\sharp}}(W^\m(A)))$. Then we have \\ $(U_{d-1})_{nr + r -1} = 0$ for all $n \in \Z$.

Let $U_{d} = \ker(H^{d}_{K^{\sharp}}(W^\m(A))(1) \xrightarrow{t^{-1}} H^{d}_{K^{\sharp}}(W^\m(A)))$. Then by a similar argument  we have  $(U_{d})_{nr + r -1} = 0$ for all $n \in \Z$.

Set $G = G_\m(A)$. If $\charp k = p$ then $V_{d-2}, V_{d-1}, U_{d-1}, U_{d-2}$ are graded $F_G$-finite $F_G$-modules. Each of these modules vanish for infinitely many $n \gg 0$ and infinitely many $n \ll 0$. So by \ref{cor-inf-vanish} each of them are zero.

If $\charp k = 0$ then $V_{d-2}, V_{d-1}, U_{d-1}, U_{d-2}$ are graded, generalized Eulerian, holonomic $A_d(k)$-modules. Each of these modules vanish for infinitely many $n \gg 0$ and infinitely many $n \ll 0$. So by \ref{cor-inf-vanish} each of them are zero.

As we have short exact sequences $ 0 \rt V_{d-2} \rt H^{d-1}_I(G) \rt U_{d-1} \rt 0$ and $0 \rt V_{d-1} \rt H^{d}_I(G) \rt U_d \rt 0$ it follows that $H^{j}_I(G) = 0$ for $j \geq d -1$.
It follows that $\dim G/I \geq 2$ and $\Proj G/I$ is connected. As $G/I = A[\m t]/K^\sharp$. It follows that $\dim A[\m^r t]/(K^\sharp)^{<r>} \geq 2$ and $\Proj A[\m^r t]/(K^\sharp)^{<r>}$ is connected. By \ref{const-prop}, we get  that $\dim G_{\m^r}(A)/J \geq 2$ and $\Proj G_{\m^r}(A)/J$ is connected. The result follows.

 \end{proof}

\end{document}